\theoremstyle{plain}
\newtheorem{theorem}{Theorem}[section]
\newtheorem{lemma}{Lemma}[section]
\theoremstyle{definition}
\newtheorem{definition}{Definition}[section]
\newtheorem{remark}{Remark}[section]
\begin{document}

\title[Norm attainment and semi-inner-products in normed spaces]{On the norm attainment set of a bounded linear operator and semi-inner-products in normed spaces}
\author[Debmalya Sain]{Debmalya Sain}

\newcommand{\acr}{\newline\indent}

\address{\llap{\,}Department of Mathematics\acr
                              Indian Institute of Science\acr
															Bengaluru\acr
															Karnataka 560012\acr
                              INDIA}
\email{saindebmalya@gmail.com}

\thanks{The research of the author is sponsored by Dr. D. S. Kothari Postdoctoral Fellowship under the mentorship of Professor Gadadhar Misra, to whom the author is immensely indebted. The author feels elated to acknowledge the colossal positive contribution of his alma mater, Ramakrishna Mission Vidyapith, Purulia, in every sphere of his life! He is extremely grateful to Professor Kallol Paul for a detailed first reading and valuable comments.} 

\subjclass[2010]{Primary 47A05, Secondary 46B20.}
\keywords{normed space; linear operator; norm attainment; semi-inner-product.}

\begin{abstract}
We obtain a complete characterization of the norm attainment set of a bounded linear functional on a normed space, in terms of a semi-inner-product defined on the space. Motivated by this result, we further apply the concept of semi-inner-products to obtain a complete characterization of the norm attainment set of a bounded linear operator between any two real normed spaces. In particular, this answers an open question raised recently in [Sain, D.,  \textit{On the norm attainment set of a bounded linear operator}, J. Math. Anal. Appl., \textbf{457} (2018), 67-76]. Our results illustrate the applicability of semi-inner-products towards a better understanding of the geometry of normed spaces.   
\end{abstract}

\maketitle

\section{Introduction.}

Norm of a vector is of fundamental importance in operator theory and functional analysis. The concept of norm of a bounded linear operator (or, functional) gives rise to the following two natural queries. First, whether the norm of the concerned operator is attained. And, if the answer to this question is in the affirmative, the second question is how to identify the points at which the norm is attained. While the first question has been explored and answered in great detail, the second question, surprisingly enough, had laid dormant for a long time. The primary purpose of this short note is to answer this question. As a matter of fact, we obtain a complete characterization of the norm attainment set of a bounded linear functional on any normed space. We further extend our study to completely characterize the norm attainment set of a bounded linear operator between real normed spaces. \\
Another major highlight of the present work is that through our study, we also illustrate the applicability of the concept of semi-inner-products in the study of geometry of normed spaces. In recent times, the structure and properties of the norm attainment set of a bounded linear operator between normed spaces has received wide attention \cite{S,Sa}. The present author obtained an interesting necessary condition for operator norm attainment in \cite{Sa}, for real normed spaces. In the same paper, complete characterization of the norm attainment set of a bounded linear operator between normed spaces was identified as an important open question in the geometry of Banach spaces. In this paper, we obtain a solution to this problem by applying the concept of semi-inner-products. Without further ado, let us establish the relevant notations and terminologies to be used throughout the paper.\\

Let $ \mathbb{X},~\mathbb{Y} $ be normed spaces of dimension strictly greater than $ 1, $ over the field $ \mathbb{K}, $ real or complex. Without mentioning any further, we assume that $ \mathbb{X},~\mathbb{Y} $ are inherently linear spaces. Let $ B_{\mathbb{X}} = \{ x \in \mathbb{X} : \| x \| \leq 1\} $ and $ S_{\mathbb{X}} = \{ x \in \mathbb{X} : \| x \| =1 \} $ denote the unit ball and the unit sphere of $ \mathbb{X} $ respectively. Let $ \mathbb{X}^{*} $ be the dual space of $ \mathbb{X}. $ For any $ f \in \mathbb{X}^{*}, $ let $ \textit{N}(f) $ denote the null space of $ f. $ Given any $ x \in S_{\mathbb{X}}, $ a functional $ \psi_{x} \in S_{\mathbb{X}^{*}} $ is said to be a supporting functional at $ x $ if $ \| \psi_{x} \| = 1 =  \psi_{x} (x). $ It is worth noticing that the Hahn-Banach theorem guarantees the existence of at least one supporting functional at each point of $ S_{\mathbb{X}}. $ Given any $ \lambda x \in \mathbb{X}, $ where $ \lambda \in \mathbb{K} $ and $ x \in S_{\mathbb{X}}, $ we define, $ \psi_{\lambda x} = \overline{\lambda} \psi_{x}. $ We say that $ \mathbb{X} $ is smooth if given any $ x \in S_{\mathbb{X}}, $ there exists a unique supporting functional at $ x. $ In other words, if $ \mathbb{X} $ is smooth then given any $ x \in S_{\mathbb{X}}, $ there exists a unique supporting functional $ \psi_{x} $ at $ x. $ Given $ T \in \mathbb{L}(\mathbb{X},\mathbb{Y}), $ let $ M_{T} $ denote the norm attainment set of $ T, $  i.e., $ M_{T} = \{ x \in S_{\mathbb{X}} : \|Tx\| = \| T \|\}. $ We next mention the concept of semi-inner-products in normed spaces, which is integral to the theme of this paper. 

\begin{definition}
Let $ \mathbb{X} $ be a normed space. A function $ [ ~,~ ] : \mathbb{X} \times \mathbb{X} \longrightarrow \mathbb{K}(=\mathbb{R},~\mathbb{C}) $ is a semi-inner-product (s.i.p.) if and only if for any $ \alpha,~\beta \in \mathbb{K} $ and for any $ x,~y,~z \in \mathbb{X}, $ it satisfies the following:\\
$ (a) $ $ [\alpha x + \beta y, z] = \alpha [x,z] + \beta [y,z], $\\
$ (b) $ $ [x,x] > 0, $ whenever $ x \neq 0, $\\
$ (c) $ $ |[x,y]|^{2} \leq [x,x] [y,y], $\\
$ (d) $ $ [x,\alpha y] = \overline{\alpha} [x,y]. $
\end{definition} 

Giles proved in \cite{G} that every normed space $ (\mathbb{X},\|\|) $ can be represented as an s.i.p. space $ (\mathbb{X},[~,~]) $ such that for all $ x \in \mathbb{X}, $ we have, $ [x,x] = \| x \|^{2}. $ In general, there can be many compatible s.i.p. corresponding to a given norm. Whenever we speak of a s.i.p. $ [~,~] $ in context of a normed space $ \mathbb{X} $, we implicitly assume that  $ [~,~] $ is compatible with the norm, i.e., for all $ x \in \mathbb{X}, $ we have, $ [x,x] = \| x \|^{2}. $ Lumer stated in \cite{L} that there exists a unique s.i.p. on a normed space if and only if the space is smooth. If $ \mathbb{X} $ is smooth, then the unique s.i.p. on $ \mathbb{X} $ can easily be seen to be the following: $ [x,y] = \psi_{y}(x) $ for all $ x,~y \in \mathbb{X}. $\\  

We make use of the notion of Birkhoff-James orthogonality \cite{B,J} in normed spaces to relate s.i.p. with norm attainment of a bounded linear operator. An element $ x \in \mathbb{X} $ is said to be orthogonal to another element $ y \in \mathbb{X} $ in the sense of Birkhoff-James, written as $ x \perp_{B} y, $ if $ \| x + \lambda y \| \geq \| x \| $ for all scalars $ \lambda. $ While studying Birkhoff-James orthogonality of bounded linear operators between $\textit{real}$ normed spaces, the author introduced the following notations in \cite{S}: For any two elements $ x, y $ in $ \mathbb{X}, $ let us say that $ y \in x^{+} $ if $ \| x + \lambda y \| \geq \| x \| $ for all $ \lambda \geq 0. $ Accordingly, we say that $ y \in x^{-} $ if $ \| x + \lambda y \| \geq \| x \| $ for all $ \lambda \leq 0. $ Let  $ x^{\perp} = \{ y \in \mathbb{X} : x \perp_{B} y. \} $\\

Using the concept of s.i.p., we first obtain a complete characterization of the norm attainment set of a bounded linear functional on any normed space. This evidently illustrates the strength of s.i.p. type arguments in studying the geometry of normed spaces. Motivated by this result, we next obtain a complete characterization of the norm attainment set of a bounded linear operator between real normed spaces. In \cite{Sa}, the author posed the following open question in the context of real normed spaces:\\

\emph{Let $ T $ be a bounded linear operator defined on a Banach space $ \mathbb{X}. $ Find a necessary and sufficient condition for $ x \in S_{\mathbb{X}} $ to be such that $ x \in M_{T}. $} \\

Our result gives a complete solution to this problem by applying the concept of s.i.p. in normed spaces. We would also like to mention that our answer is not restricted by either of the conditions of completeness and finite-dimensionality assumed in the above mentioned problem. Furthermore, we do not require the domain space and the range space to be identical. As an immediate application of our approach, it is possible to characterize the isometries between any two real normed spaces.      
   
\section{ Main Results.}
As promised in the introduction, we begin with a complete characterization of the norm attainment set of a bounded linear functional on a normed space.

\begin{theorem}
Let $ \mathbb{X} $ be a normed space and $ f \in \mathbb{X}^{*}. $ Let $ z \in S_{\mathbb{X}}. $ Then $ z \in M_{f} $ if and only if there exists a s.i.p. $ [~,~] $ on $ \mathbb{X} $ such that $ f(x) = [x,\|f\| z] $ for any $ x \in \mathbb{X}. $ 
\end{theorem}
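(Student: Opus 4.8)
The plan is to establish the two implications separately. The ``if'' direction will be a one-line computation, while the ``only if'' direction --- the substantive part --- will be obtained by exhibiting a compatible s.i.p.\ on $\mathbb{X}$ that encodes $f$ through a supporting functional at $z$.

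For the ``if'' direction I would substitute $x=z$ into the hypothesized identity $f(x)=[x,\|f\|z]$. Since $\|f\|$ is a nonnegative real scalar, axiom $(d)$ gives $[z,\|f\|z]=\|f\|[z,z]$, and compatibility gives $[z,z]=\|z\|^{2}=1$; hence $f(z)=\|f\|$, so that $|f(z)|=\|f\|$ and therefore $z\in M_{f}$. I note in passing that this computation also shows that any $z$ admitting such an s.i.p.\ must satisfy $f(z)=\|f\|$ \emph{exactly} --- this is the normalization I will want in the converse.

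For the ``only if'' direction the case $f=0$ is immediate ($M_{f}=S_{\mathbb{X}}$, and $f(x)=0=[x,0]$ for any s.i.p., one of which exists by Giles \cite{G}), so assume $f\neq 0$. From $z\in M_{f}$ we get $|f(z)|=\|f\|$, and I normalize so that $f(z)=\|f\|$ (for a real space this is merely the choice of sign between $z$ and $-z$; the complex phase is a cosmetic adjustment). Then $\psi:=\|f\|^{-1}f$ satisfies $\|\psi\|=1=\psi(z)$, i.e.\ $\psi$ is a supporting functional at $z$. The goal is then to produce a compatible s.i.p.\ $[~,~]$ with $[x,z]=\psi(x)$ for all $x$, for then $[x,\|f\|z]=\|f\|[x,z]=\|f\|\psi(x)=f(x)$, as wanted. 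To build it I would run the construction underlying Giles's representation theorem: using the Hahn--Banach theorem, select a supporting functional $\psi_{w}$ at each $w\in S_{\mathbb{X}}$, \emph{making the choice $\psi_{z}:=\psi$ at the point $z$}; extend to all nonzero vectors by $\psi_{\lambda w}:=\overline{\lambda}\,\psi_{w}$; and set $[x,y]:=\|y\|\,\psi_{y/\|y\|}(x)$ for $y\neq 0$ and $[x,0]:=0$. Verifying the s.i.p.\ axioms is then routine: $(a)$ from linearity of each $\psi_{w}$; $(b)$ and compatibility from $[x,x]=\|x\|^{2}\psi_{x/\|x\|}(x/\|x\|)=\|x\|^{2}$; $(c)$ from $\|\psi_{y/\|y\|}\|=1$; and $(d)$ from $\alpha y/\|\alpha y\|=(\alpha/|\alpha|)(y/\|y\|)$ together with $\psi_{(\alpha/|\alpha|)w}=\overline{(\alpha/|\alpha|)}\,\psi_{w}$.

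I expect the only non-formal step, and hence the main obstacle, to be the confirmation that fixing the supporting functional at the single distinguished point $z$ does not interfere with any of the four axioms --- especially the conjugate-homogeneity $(d)$, where one must track how the normalized direction $y/\|y\|$, and therefore $\psi_{y/\|y\|}$, transforms under an arbitrary scaling of $y$. Since this is precisely the verification carried out in Giles \cite{G}, it may be invoked rather than redone; the one genuinely new ingredient is the freedom to set $\psi_{z}=\psi$, which is legitimate exactly because $z\in M_{f}$ (normalized as above) makes $\psi$ a supporting functional at $z$. A minor secondary point, flagged above, is the scalar/phase normalization of $f$ in the complex case.
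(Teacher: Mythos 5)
Your proposal is correct and follows essentially the same route as the paper: both run Giles's construction of a compatible s.i.p.\ with the single modification that the supporting functional selected at $z$ is $f/\|f\|$, whence $[x,z]=\|f\|^{-1}f(x)$ (the paper reaches this last identity by a small detour through $\textit{N}(f)$ and the element $f(z)x-f(x)z$, but the content is identical to your direct computation). The sign/phase normalization $f(z)=\|f\|$ that you flag is indeed exactly what makes $f/\|f\|$ a legitimate candidate for the supporting functional at $z$, a point the paper's proof passes over silently.
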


\begin{proof}
Let us first prove the easier ``if" part of the theorem. Suppose there exists a s.i.p. $ [~,~] $ on $ \mathbb{X} $ such that $ f(x) = [x,\|f\| z] $ for any $ x \in \mathbb{X}. $ Putting $ x = z,  $ we obtain, 
\[ f(z) = [z,\|f\| z] = \| f \| [z,z] = \| f \| \| z \|^{2} = \| f \|. \] 
This proves that $ z \in M_{f} $ and completes the proof of the theorem in one direction.\\

Let us now prove the comparatively trickier ``only if" part of the theorem. We first observe that if $ f = 0 $ then the theorem holds trivially. Let $ f $ be nonzero and let $ z \in M_{f}. $ We employ the same arguments as given by Giles in \cite{G}, with a minor modification, to obtain a s.i.p. on $ \mathbb{X}. $ By virtue of the Hahn-Banach theorem, given any $ x \in S_{\mathbb{X}}, $ there exists at least one bounded linear functional $ g_{x} \in \mathbb{X}^{*} $ such that $ g_{x}(x) = \| g_x \| = 1. $ Following Giles \cite{G}, we choose exactly one such $ g_x. $ For $ \lambda x \in \mathbb{X}, $ where $ \lambda \in \mathbb{K} $ and $ x \in S_{\mathbb{X}}, $ we choose $ g_{\lambda x} \in \mathbb{X}^{*} $ such that $ g_{\lambda x} = \overline{\lambda} g_{x}. $ Our only modification is the following:\\

It is easy to see that since $ f $ is nonzero, $ \frac{f}{\|f\|} $ is a candidate for $ g_{z}. $ We choose $ g_{z} = \frac{f}{\|f\|}. $\\

We now define a s.i.p. $ [~,~] $ on $ \mathbb{X} $ by $ [x,y] = g_{y}(x) $ for all $ x,~y \in \mathbb{X}. $ It can be easily verified that $ [~,~] $ satisfies all the properties $ (a) - (d) $ of s.i.p., so that $ [~,~] $ is indeed a s.i.p. on $ \mathbb{X}. $ We next observe that given any $ w \in \textit{N}(f), $ $ [w,z] = g_z(w) = \frac{1}{\|f\|} f(w) = 0. $\\
Given any $ x \in \mathbb{X}, $ consider the element $ u = f(z) x - f(x) z.  $ Clearly, $ u \in \textit{N}(f). $ Therefore, we have,
\[ 0 = [u,z] = f(z) [x,z] - f(x) [z,z] = \|f\| [x,z] - f(x) \|z\|^{2} = \|f\| [x,z] - f(x). \]

In other words, we have, $ f(x) = \|f\| [x,z] = [x,\|f\|z]. $ Since this is true for any $ x \in \mathbb{X}, $ this completes the proof of the theorem in the other direction and establishes it completely.

\end{proof}

In view of Theorem $ 2.1, $ it is natural to inquire about an analogous characterization of the norm attainment set of a bounded linear operator between normed spaces, instead of bounded linear functionals. In the remaining part of this paper, we illustrate that it is possible to once again apply s.i.p. type arguments to answer this question for real normed spaces. Our initial objective is to consider the case where both the domain space and the range space are smooth.

\begin{theorem}
Let $ \mathbb{X},~\mathbb{Y} $ be smooth real normed spaces and $ T \in \mathbb{L}(\mathbb{X},\mathbb{Y}). $ Let $ z \in S_{\mathbb{X}}. $ Then $ z \in M_{T} $ if and only if $ [Tx,Tz] = \|T\|^{2} [x,z] $ for any $ x \in \mathbb{X}. $ 
\end{theorem}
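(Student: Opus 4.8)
The plan is to reduce the operator statement to the functional case of Theorem~2.1 by pulling the (unique) supporting functional at $Tz$ back through $T$. The ``if'' direction is immediate: if $[Tx,Tz]=\|T\|^{2}[x,z]$ for every $x\in\mathbb{X}$, then putting $x=z$ gives $\|Tz\|^{2}=[Tz,Tz]=\|T\|^{2}[z,z]=\|T\|^{2}$, so $z\in M_{T}$; and the case $T=0$ is trivial, since then both sides vanish identically while $M_{T}=S_{\mathbb{X}}$. So assume from now on that $T\neq 0$ and $z\in M_{T}$, whence $\|Tz\|=\|T\|\neq 0$.

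Since $\mathbb{Y}$ is smooth, its s.i.p.\ is the unique one, namely $[y_{1},y_{2}]=\psi_{y_{2}}(y_{1})$, and from properties $(b)$ and $(c)$ of the s.i.p.\ one reads off $\psi_{Tz}(Tz)=[Tz,Tz]=\|Tz\|^{2}=\|T\|^{2}$ together with $\|\psi_{Tz}\|=\|Tz\|=\|T\|$. Consider the functional $g:=\psi_{Tz}\circ T\in\mathbb{X}^{*}$. On the one hand $g(z)=\psi_{Tz}(Tz)=\|T\|^{2}$; on the other hand $\|g\|\leq\|\psi_{Tz}\|\,\|T\|=\|T\|^{2}$. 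Since $\|z\|=1$, these two facts force $\|g\|=\|T\|^{2}$ and $z\in M_{g}$, so that $g/\|g\|$ is a supporting functional at $z$.

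The decisive step is now to invoke smoothness of the \emph{domain} $\mathbb{X}$: the supporting functional at $z$ is unique, and because $\|z\|=1$ the unique s.i.p.\ on $\mathbb{X}$ identifies it with $\psi_{z}=[\,\cdot\,,z]$. Hence $g=\|g\|\,\psi_{z}=\|T\|^{2}\psi_{z}$, which unwinds to
\[
[Tx,Tz]=\psi_{Tz}(Tx)=g(x)=\|T\|^{2}\psi_{z}(x)=\|T\|^{2}[x,z]
\]
for every $x\in\mathbb{X}$, completing the proof.

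I expect the argument to be short, so there is no serious obstacle of substance; the points requiring care are the bookkeeping with the s.i.p.\ normalizations --- checking $\|\psi_{Tz}\|=\|T\|$ and that $\psi_{z}$ genuinely is the unit supporting functional at $z$ (here $\|z\|=1$ is used) --- and making sure that the two smoothness hypotheses are deployed in the right places: smoothness of $\mathbb{Y}$ so that $[\,\cdot\,,Tz]$ is canonically $\psi_{Tz}$, and smoothness of $\mathbb{X}$ to pin $g$ down as a multiple of $\psi_{z}$. Beyond the conventions already fixed in the paper, the reasoning does not seem to lean essentially on the scalar field being real, although we only assert it in that setting.
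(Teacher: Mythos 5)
Your proof is correct, but it takes a genuinely different route from the paper's. The paper's argument for the ``only if'' direction leans on Theorem 2.3 of the cited work [Sa] (a norm-attaining operator between smooth spaces maps $z^{\perp}$ into $(Tz)^{\perp}$), identifies $z^{\perp}$ with $\textit{N}(\psi_{z})$ via smoothness, and then computes both sides of the identity on the decomposition $x=\alpha z+h$ with $h\in \textit{N}(\psi_{z})$. You instead pull the supporting functional at $Tz$ back through $T$: setting $g=\psi_{Tz}\circ T$, the chain $g(z)=\|T\|^{2}\geq\|g\|$ forces $z\in M_{g}$ with $\|g\|=\|T\|^{2}$, and smoothness of $\mathbb{X}$ then pins $g/\|g\|$ down as $\psi_{z}$, giving the identity immediately. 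Your bookkeeping checks out: $\|\psi_{Tz}\|=\|Tz\|=\|T\|$ and $\psi_{Tz}(Tz)=\|Tz\|^{2}$ follow from the convention $\psi_{Tz}=\|Tz\|\psi_{Tz/\|Tz\|}$, the degenerate cases $T=0$ and the division by $\|g\|$ are handled, and both smoothness hypotheses are used exactly where you say they are. What your approach buys is self-containment --- no appeal to the orthogonality-preservation theorem of [Sa] is needed, and the argument is really the functional-case idea of Theorem 2.1 in disguise --- whereas the paper's route makes the geometric mechanism (preservation of Birkhoff--James orthogonality at the norm-attaining point) explicit, which is precisely the ingredient the author must then replace by Lemmas 2.1 and 2.2 to drop the smoothness hypotheses in Theorem 2.3. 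Your closing observation that the argument does not essentially use the real scalars is also fair, since with the convention $\psi_{\lambda x}=\overline{\lambda}\psi_{x}$ every step survives verbatim in the complex case.
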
    

\begin{proof}
The ``if" part of the theorem follows easily as before, by putting $ x=z. $ Let us prove the nontrivial ``only if" part of the theorem. Clearly, the result holds if $ T $ is the zero operator. Let us assume that $ T $ is nonzero. Let $ z \in S_{\mathbb{X}} $ be such that $ z \in M_{T}. $ Since $ \mathbb{X} $ and $ \mathbb{Y} $ are smooth spaces, it follows from Theorem $ 2.3 $ of \cite{Sa} that $ T(z^{\perp}) \subseteq (Tz)^{\perp}. $ We also note that there exist a unique s.i.p. on either of $ \mathbb{X} $ and $ \mathbb{Y}. $ We claim that $ \psi_{z} (h) = \psi_{Tz}(Th) = 0, $ for any $ h \in z^{\perp}, $ where $ \psi_{z} $ is the support functional at $ z \in S_{\mathbb{X}}, $ $ \psi_{\frac{Tz}{\|Tz\|}} $ is the support functional at $ \frac{Tz}{\|Tz\|} \in S_{\mathbb{Y}} $ and $ \psi_{Tz} = \| Tz \| \psi_{\frac{Tz}{\|Tz\|}}. $ We note that since $ T $ is nonzero and $ z \in M_{T}, $ we must have, $ Tz \neq 0. $ To prove our claim, we argue in the following way:\\

Clearly, $ \psi_{z} $ attains norm at $ z. $ Therefore, it follows that $ z \perp_{B} \textit{N}(\psi_{z}). $ As $ \mathbb{X} $ is smooth, there exists a unique hyperplane $ H $ of codimension $ 1 $ such that $ z \perp_{B} H. $ It is trivial to observe that $ \textit{N}(\psi_{z}) $ is a hyperplane of codimension $ 1. $ Therefore, we must have, $ H = \textit{N}(\psi_{z}). $ Since $ z \perp_{B} h, $ it necessarily follows that $ h \in \textit{N}(\psi_{z}), $ i.e., $ \psi_{z}(h) = 0. $ Similarly, using the smoothness of $ \mathbb{Y} $ and the fact that $ Tz \perp_{B} Th, $ we further have, $ Th \in \textit{N}(\psi_{Tz}), $ i.e., $ \psi_{Tz} (Th) = 0. $ This completes the proof of our claim.\\

We observe that given any $ x \in \mathbb{X}, $ there is a unique representation $ x = \alpha z + h, $ where $ \alpha \in \mathbb{R} $ and $ h \in H. $ Now, we have,
\[ [Tx,Tz] = [\alpha Tz + Th, Tz] = \alpha [Tz,Tz] + [Th,Tz] = \alpha \|Tz\|^{2} + \psi_{Tz} (Th) = \alpha \|T\|^{2}. \] 

On the other hand, we also have,
\[ [x,z] = [\alpha z + h, z] = \alpha [z,z] + [h,z] = \alpha \|z\|^{2} + \psi_{z}(h) = \alpha. \] 

Therefore, we have, $ [Tx,Tz] = \|T\|^{2} [x,z]. $ Since $ x \in \mathbb{X} $ was chosen arbitrarily, this establishes the theorem.
\end{proof}

\begin{remark}
It is worthwhile to observe that the crux of the proof of the above theorem is practically dependent on Theorem $ 2.3 $ of \cite{Sa}, that states that if a bounded linear operator between smooth real normed spaces attains norm at a particular point of the unit sphere then the operator preserves Birkhoff-James orthogonality at that point. This explains why we choose to work initially with the additional assumption of smoothness.  
\end{remark}

Next, we would like to extend Theorem $ 2.2 $ to any pair of real normed spaces. As explained in Remark $ 2.1, $ first we need the following two lemmas, in order to replace the relevant part of Theorem $ 2.3 $ of \cite{Sa} in our arguments.

\begin{lemma}
Let $ \mathbb{X} $ be a normed space and let $ \textit{A},~\textit{B} $ be two nonempty disjoint open subsets of $ \mathbb{X}. $ Then there does not exist a path in $ \mathbb{X} $ that starts at a point in $ \textit{A}, $ ends at a point in $ \textit{B} $ and lies entirely within $ \textit{A}~\bigcup~\textit{B}. $
\end{lemma}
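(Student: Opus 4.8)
The plan is to reduce the statement to the connectedness of the unit interval $[0,1] \subseteq \mathbb{R}$. Recall that a path in $\mathbb{X}$ is a continuous map $\gamma \colon [0,1] \longrightarrow \mathbb{X}$; the hypotheses on the path amount to $\gamma(0) \in \textit{A}$, $\gamma(1) \in \textit{B}$, and $\gamma([0,1]) \subseteq \textit{A} \cup \textit{B}$. I would argue by contradiction, assuming such a $\gamma$ exists.

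The first step is to pull back the separation. Consider $U = \gamma^{-1}(\textit{A})$ and $V = \gamma^{-1}(\textit{B})$, regarded as subsets of $[0,1]$ with its subspace topology. Since $\textit{A}$ and $\textit{B}$ are open in $\mathbb{X}$ and $\gamma$ is continuous, both $U$ and $V$ are relatively open in $[0,1]$. Since $\textit{A} \cap \textit{B} = \emptyset$ we get $U \cap V = \emptyset$; since $\gamma([0,1]) \subseteq \textit{A} \cup \textit{B}$ we get $U \cup V = [0,1]$; and $0 \in U$, $1 \in V$, so neither is empty. The second step is to invoke the fact that $[0,1]$, being a closed bounded interval of the real line, is connected, hence cannot be written as a union of two disjoint nonempty relatively open sets. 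This contradiction finishes the proof.

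There is no genuine obstacle here: the continuity-pullback step and the bookkeeping of the subspace topology are routine, and the one substantive ingredient, connectedness of $[0,1]$, is a standard elementary fact that we take for granted. If one prefers to avoid quoting connectedness as a black box, the same contradiction can be produced by hand: set $t^{*} = \sup\{\, t \in [0,1] : \gamma([0,t]) \subseteq \textit{A} \,\}$; then $\gamma(t^{*}) \in \textit{A} \cup \textit{B}$, but openness of $\textit{A}$ and of $\textit{B}$ together with the defining property of the supremum shows that $\gamma(t^{*})$ can lie in neither set, a contradiction.
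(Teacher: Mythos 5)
Your proof is correct. Your main argument --- pulling back $\textit{A}$ and $\textit{B}$ along $\gamma$ to obtain a disconnection $U \cup V = [0,1]$ with $U, V$ nonempty, disjoint, and relatively open, contradicting the connectedness of $[0,1]$ --- is a cleaner and more standard route than the one the paper takes: the paper does not quote connectedness but instead argues directly with $t_0 = \sup\{t \in [0,1] : f(t) \in \textit{A}\}$, splitting into the cases $f(t_0) \in \textit{A}$ and $f(t_0) \in \textit{B}$ and using openness to contradict the supremum property in each case. That supremum argument is essentially the ``by hand'' alternative you sketch in your closing paragraph (the paper takes the supremum of $\{t : f(t) \in \textit{A}\}$ rather than of $\{t : f([0,t]) \subseteq \textit{A}\}$, but the two sets have the same supremum here and the case analysis is the same). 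What your primary approach buys is brevity and transparency, at the cost of citing connectedness of $[0,1]$ as a black box; what the paper's (and your alternative) approach buys is self-containedness, effectively re-proving that instance of connectedness inline. Either version is a complete and valid proof of the lemma.
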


\begin{proof}
We prove the result by the method of contradiction. If possible, suppose that there exists a path $ \{ f(t) : t \in [0,1] \} $ in $ \mathbb{X}, $ where $ f:[0,1] \longrightarrow \mathbb{X} $ is continuous, such that $ f(0) \in \textit{A}, $ $ f(1) \in \textit{B} $ and $ f(t) \in \textit{A}~\bigcup~\textit{B} $ for any $ t \in [0,1]. $ Let $ \textit{C} = \{ t \in [0,1] : f(t) \in \textit{A}. \} $ Clearly, $ \textit{C} $ is a nonempty subset of $ \mathbb{R}, $ which is bounded above. Let $ t_0 = sup~\textit{C} \in [0,1]. $ Let $ x_0 = f(t_0). $ Since $ f(0) \in \textit{A},~f(1) \in \textit{B} $ and $ \textit{A},~\textit{B} $ are open, it follows that $ t_0 \in (0,1). $ According to our assumptions, either $ x_0 \in \textit{A} $ or $ x_0 \in \textit{B}. $ We will obtain a contradiction in each case to complete the proof of the lemma.\\
Let us first assume that $ x_0 = f(t_0) \in \textit{A}. $ Since $ \textit{A} $ is open, there exists $ \delta_0 > 0 $ such that $ f(t_0 - \delta_0,t_0+\delta_0) \subset \textit{A}. $ However, this implies that $ t_0 + \frac{\delta_0}{2} \in \textit{C}, $ contradicting our assumption that $ t_0 = sup~\textit{C}. $\\
Let us now assume that $ x_0 = f(t_0) \in \textit{B}. $ Since $ \textit{B} $ is open, there exists $ \delta_0 > 0 $ such that $ f(t_0 - \delta_0,t_0+\delta_0) \subset \textit{B}. $ Since $ \textit{A} $ and $ \textit{B} $ are disjoint, $ (t_0 - \delta_0,t_0+\delta_0) \notin \textit{A}. $ Moreover, for any $ t \geq t_0, $ we must have, $ t \notin \textit{C}. $ Therefore it follows that $ t_0 = sup~\textit{C} \leq t_0 - \delta_0, $ a contradiction.\\
This completes the proof of the lemma.  
\end{proof}

\begin{lemma}
Let $ \mathbb{X},~\mathbb{Y} $ be normed spaces and $ T \in \mathbb{L}(\mathbb{X},\mathbb{Y}). $ Let $ z \in S_{\mathbb{X}} $ be such that $ z \in M_{T}. $ Then there exists $ y \in \mathbb{X} \setminus \{0\} $ such that $ z \perp_{B} y $ and $ Tz \perp_{B} Ty. $
\end{lemma}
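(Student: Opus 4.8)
The plan is to build, for each direction $y \in S_{\mathbb{X}}$, a measure of ``orthogonality defect'' and run a continuity/connectedness argument over the sphere $S_{\mathbb{X}}$. Fix $z \in M_T$, so $\|Tz\| = \|T\|$. For $y \in \mathbb{X}$, the condition $z \perp_B y$ is equivalent to $y \in z^{+} \cap z^{-}$ in the author's notation; and likewise $Tz \perp_B Ty$ is equivalent to $Ty \in (Tz)^{+} \cap (Tz)^{-}$. The key observation, already implicit in \cite{S,Sa}, is that membership in $x^{+}$ and in $x^{-}$ can be detected by one-sided Gateaux-type derivatives of the norm: writing $\rho_{+}(x,y) = \lim_{\lambda \to 0^{+}} \tfrac{\|x + \lambda y\| - \|x\|}{\lambda}$ and $\rho_{-}(x,y) = \lim_{\lambda \to 0^{-}} \tfrac{\|x+\lambda y\| - \|x\|}{\lambda}$ (both limits exist by convexity of the norm), one has $y \in x^{+} \iff \rho_{+}(x,y) \ge 0$ and $y \in x^{-} \iff \rho_{-}(x,y) \le 0$, and $x \perp_B y \iff \rho_{-}(x,y) \le 0 \le \rho_{+}(x,y)$. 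So the goal becomes: find $y \in S_{\mathbb{X}}$ with $\rho_{-}(z,y) \le 0 \le \rho_{+}(z,y)$ and simultaneously $\rho_{-}(Tz,Ty) \le 0 \le \rho_{+}(Tz,Ty)$.

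First I would handle the case where $Tz^{\perp}$ is easy: if there already exists $y \in z^{\perp} \setminus \{0\}$ with $Ty = 0$, we are done since $Tz \perp_B 0$ trivially; so assume $T$ is injective on $z^{\perp}$, or at least that no such $y$ works, and in particular $\dim \mathbb{X} \ge 2$ guarantees $z^{\perp} \neq \{0\}$ (James's theorem, or Birkhoff's original argument). Now consider the restriction of $T$ to a suitable $2$-dimensional subspace, or work directly on $S_{\mathbb{X}}$: define on the (connected, since $\dim \mathbb{X} \ge 2$) sphere $S_{\mathbb{X}}$ the function capturing whether $Tz \perp_B Ty$ fails ``to the left'' or ``to the right.'' Concretely, pick $y_0 \in S_{\mathbb{X}}$ with $z \perp_B y_0$ (exists by James). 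If $Tz \perp_B Ty_0$, we are done. Otherwise, by definition of Birkhoff--James orthogonality failing, either $\rho_{+}(Tz, Ty_0) < 0$ or $\rho_{-}(Tz, Ty_0) > 0$; replacing $y_0$ by $-y_0$ (which preserves $z \perp_B y_0$) we may assume $\rho_{+}(Tz, Ty_0) < 0$, i.e. $\|Tz + \lambda Ty_0\| < \|Tz\| = \|T\|$ for small $\lambda > 0$. The idea is then to rotate $y_0$ within $S_{\mathbb{X}}$ along a path toward $-y_0$; along the way $z \perp_B y_t$ need not hold, but I would instead set up the path in the two relevant ``defect'' quantities and invoke Lemma~2.1.

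Here is the mechanism I would use, which is why Lemma~2.1 was proved first. Consider the set $\{y \in S_{\mathbb{X}} : z \perp_B y\}$; using $z \in M_T$ and the definition of $\|T\|$, for \emph{every} such $y$ one cannot have $\|Tz + \lambda Ty\| > \|Tz\|$ forced in a way that... — more precisely, I would argue by contradiction: suppose for every $y \in z^{\perp} \setminus\{0\}$ we have $Tz \not\perp_B Ty$. Partition $z^{\perp} \cap S_{\mathbb{X}}$ (or rather a connected sphere in $z^{\perp}$, using $\dim z^{\perp} \ge 1$; if $\dim z^{\perp} = 1$ the two antipodal points $\pm y_0$ must be examined directly and give a contradiction by the ``left/right'' dichotomy applied to $y_0$ and $-y_0$) into $\textit{A} = \{y : \rho_{+}(Tz,Ty) < 0\}$-type set and $\textit{B} = \{y : \rho_{-}(Tz,Ty) > 0\}$-type set; show these are open (one-sided derivatives of the norm are upper/lower semicontinuous appropriately, and strict inequalities persist), disjoint (a point in both would force $\|Tz\|$ to be a strict local min \emph{and} max along $Ty$, impossible unless $Ty=0$, handled separately), and cover the connected set; then Lemma~2.1 forbids a path from $\textit{A}$ to $\textit{B}$, yet $y_0 \in \textit{A}$ or $\textit{B}$ while $-y_0$ lies in the other one since $\rho_{\pm}(Tz, T(-y)) = -\rho_{\mp}(Tz, Ty)$ — a contradiction. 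Hence some $y \in z^{\perp}\setminus\{0\}$ has $Tz \perp_B Ty$, and that $y$ is the desired vector.

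The main obstacle I anticipate is the topological bookkeeping in the last step: verifying that $\textit{A}$ and $\textit{B}$ are genuinely open in the relevant sphere and genuinely disjoint, and correctly reducing to a connected domain (the subtlety when $\dim z^{\perp} = 1$, where the ``sphere'' is two points and Lemma~2.1 does not apply, so one instead uses directly that $y_0 \in \textit{A} \Rightarrow -y_0 \in \textit{B}$ gives $-y_0$ orthogonal after all, or rather that one of $\pm y_0$ must already satisfy $Tz \perp_B T(\cdot)$). Openness hinges on the precise semicontinuity properties of $\lambda \mapsto \rho_{\pm}(x,y)$ as functions of $y$, which follow from the convexity of the norm and a standard $\varepsilon$-argument, but need to be stated carefully. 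Everything else — the derivative characterization of $\perp_B$, the sign-flip identities under $y \mapsto -y$, the separate treatment of $Ty = 0$ — is routine.
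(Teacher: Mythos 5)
Your argument has a genuine gap, and it sits at exactly the point where the lemma has content. Note first that nowhere in your proposal do you actually use the hypothesis $ z \in M_{T}. $ But the conclusion is false without it: take $ \mathbb{X} = \mathbb{Y} = \mathbb{R}^2 $ with the Euclidean norm, $ T(a,b) = (a+b,b) $ and $ z = (1,0); $ then $ z^{\perp} = \{(0,t) : t \in \mathbb{R}\}, $ $ Tz = (1,0), $ $ T(0,t) = (t,t), $ and $ Tz \perp_{B} T(0,t) $ only for $ t = 0, $ so no admissible $ y $ exists (here $ \|T\| > 1 = \|Tz\|, $ i.e., $ z \notin M_{T} $). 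So any correct proof must invoke $ z \in M_{T} $ somewhere. The concrete failure in your scheme is the reduction to a connected domain: $ z^{\perp} $ is a cone, not in general a subspace, so to speak of a sphere you must pass to a hyperplane $ H \subseteq z^{\perp}; $ your connectedness/antipodal argument on $ H \cap S_{\mathbb{X}} $ does go through when $ \dim H \geq 2 $ (and then proves a statement that does not need $ z \in M_{T} $ at all), but it collapses when $ \dim \mathbb{X} = 2, $ where $ H \cap S_{\mathbb{X}} = \{ \pm y_0 \} $ is disconnected. Your proposed patch --- that $ y_0 \in \textit{A} \Rightarrow -y_0 \in \textit{B} $ ``gives $ -y_0 $ orthogonal after all'' --- is not a contradiction: it merely says $ y_0 $ fails on one side and $ -y_0 $ fails on the other, which is exactly what happens in the counterexample above; and since Birkhoff--James orthogonality is homogeneous, $ Tz \perp_{B} Ty_0 $ if and only if $ Tz \perp_{B} T(-y_0), $ so the two antipodes stand or fall together and no left/right dichotomy can rescue this case. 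Unfortunately the two-dimensional case is the only one that is actually needed downstream: Theorem $ 2.3 $ applies the lemma to the restriction of $ T $ to the two-dimensional span of $ \{x,z\}. $

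The paper avoids this by running the connectedness argument in the codomain rather than inside $ z^{\perp}. $ It chooses $ u \in z^{+} \setminus z^{\perp} $ and $ v \in z^{-} \setminus z^{\perp} $ with the segment $ [u,v] $ avoiding the origin, and uses Theorem $ 2.3 $ of \cite{Sa} --- this is precisely where $ z \in M_{T} $ enters --- to conclude $ Tu \in (Tz)^{+} \setminus (Tz)^{\perp} $ and $ Tv \in (Tz)^{-} \setminus (Tz)^{\perp}. $ Lemma $ 2.1, $ applied to these two nonempty disjoint open sets in $ \mathbb{Y}, $ forces some point $ T((1-t_0)u + t_0 v) $ into $ (Tz)^{\perp}, $ and the same result from \cite{Sa}, read contrapositively, pulls $ (1-t_0)u + t_0 v $ back into $ z^{\perp}. $ If you want to salvage your route, you would have to import that pushforward property $ T(z^{\pm} \setminus z^{\perp}) \subseteq (Tz)^{\pm} \setminus (Tz)^{\perp} $ in any case, at which point you are essentially reproducing the paper's proof.
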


\begin{proof}
Clearly, the result is true if $ T $ is the zero operator. Let us assume that $ T $ is nonzero. It follows from Proposition $ 2.1 $ of \cite{S} that $ \mathbb{X} =  (z^{+} \setminus z^{\perp}) \cup (z^{-} \setminus z^{\perp}) \cup (z^{\perp}).  $ Let us choose $ u \in (z^{+} \setminus z^{\perp}) $ and $ v \in (z^{-} \setminus z^{\perp}) $ such that the line segment joining $ u $ and $ v $ does not pass through origin. We note that such a choice of $ u $ and $ v $ is always possible. We further note that the first two statements of Theorem $ 2.3 $ of \cite{Sa} does not depend on the smoothness of the concerned spaces. Therefore, we have, $ Tu \in ((Tz)^{+} \setminus (Tz)^{\perp}) $ and $ Tv \in ((Tz)^{-} \setminus (Tz)^{\perp}). $ Therefore, applying the continuity of $ T, $ it is easy to see that we have the following:\\

$ \{ T((1-t)u+tv) : t \in [0,1] \} $ is a path in $ \mathbb{Y}, $ that begins at $ Tu \in ((Tz)^{+} \setminus (Tz)^{\perp}) $ and ends at $ Tv \in ((Tz)^{-} \setminus (Tz)^{\perp}). $\\

Since $ (Tz)^{+} \cap (Tz)^{-} = (Tz)^{\perp},  $ it follows that $ ((Tz)^{+} \setminus (Tz)^{\perp})~ \bigcap~ ((Tz)^{-} \setminus (Tz)^{\perp}) = \emptyset. $ It is also easy to observe that since $ T $ is nonzero, both $ \textit{A}=((Tz)^{+} \setminus (Tz)^{\perp}) $ and $ \textit{B}=((Tz)^{-} \setminus (Tz)^{\perp}) $ are nonempty open subsets of $ \mathbb{Y}. $ Therefore, by Lemma $ 2.1, $ there cannot exist a path that starts at a point in $ \textit{A}, $ ends at a point in $ \textit{B} $ and lies completely within $ \textit{A}~ \bigcup~ \textit{B}. $ In other words, there exists $ t_{0} \in (0,1) $ such that $ T((1-t_0)u+t_0v) \notin \textit{A}~ \bigcup~ \textit{B}. $ On the other hand, since $ T(\mathbb{X}) = \textit{A}~ \bigcup~ \textit{B}~ \bigcup~ (Tz)^{\perp}, $ it follows that $ T((1-t_0)u+t_0v) \in (Tz)^{\perp}. $   Once again, it follows from $ (i) $ and $ (ii) $ of Theorem $ 2.3 $ of \cite{Sa}, that $ (1-t_0)u+t_0 v \notin (z^{+} \setminus z^{\perp})~\bigcup~(z^{+} \setminus z^{\perp}). $ Since $ \mathbb{X} =  (z^{+} \setminus z^{\perp}) \cup (z^{-} \setminus z^{\perp}) \cup (z^{\perp}),  $ we must have, $ (1-t_0)u+t_0 v \in z^{\perp}. $ Let us choose $ y = (1-t_0)u+t_0 v. $ It follows from our choice of $ u $ and $ v $ that $ y \neq 0. $ Furthermore, we have already proved that $ z \perp_{B} y $ and $ Tz \perp_{B} Ty. $ This completes the proof of the lemma.

\end{proof} 

Now, the promised characterization of the norm attainment set of a bounded linear operator between any two real normed spaces:\\

\begin{theorem}
Let $ \mathbb{X},~\mathbb{Y} $ be real normed spaces and $ T \in \mathbb{L}(\mathbb{X},\mathbb{Y}). $ Let $ z \in S_{\mathbb{X}}. $ Then $ z \in M_{T} $ if and only if given any $ x \in \mathbb{X}, $ there exists two s.i.p. $ [~,~]_{\mathbb{X}} $ and $ [~,~]_{\mathbb{Y}} $ on $ \mathbb{X} $ and $ \mathbb{Y} $ respectively such that 
\[ [Tx,Tz]_{\mathbb{Y}} = \| T \|^{2} [x,z]_{\mathbb{X}}. \]
\end{theorem}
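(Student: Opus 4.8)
The \textbf{if} direction I would dispose of immediately: assuming the stated condition, apply it with $x=z$ to obtain s.i.p.'s with $[Tz,Tz]_{\mathbb{Y}}=\|T\|^2[z,z]_{\mathbb{X}}$, i.e.\ $\|Tz\|^2=\|T\|^2$, so $z\in M_T$.

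For the \textbf{only if} direction the model to keep in mind is the proof of Theorem $2.2$. There smoothness supplied a \emph{unique} s.i.p.\ on each space and identified $z^{\perp}$ (resp.\ $(Tz)^{\perp}$) with the kernel of the supporting functional, after which the orthogonality-preservation result $T(z^{\perp})\subseteq (Tz)^{\perp}$ forced $\psi_{Tz}\circ T=\|T\|^2\psi_z$. Without smoothness neither the uniqueness of the s.i.p.\ nor the hyperplane description of $z^{\perp}$ survives, so the task becomes a \emph{choice} problem: since Giles' construction lets us prescribe the chosen supporting functional at any single point of each unit sphere, we must pick compatible s.i.p.'s on $\mathbb{X}$ and $\mathbb{Y}$ so that the identity holds.

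The choice I would make pulls a functional \emph{back} from $\mathbb{Y}$ to $\mathbb{X}$ along $T$, rather than pushing $\psi_z$ forward. Assume $T\neq 0$ (the zero operator is trivial) and let $z\in M_T$, so $Tz\neq 0$ and $\|Tz\|=\|T\|$. By Hahn--Banach pick any supporting functional $\phi\in S_{\mathbb{Y}^*}$ at $\frac{Tz}{\|Tz\|}$, so that $\phi(Tz)=\|Tz\|=\|T\|$, and set $f:=\phi\circ T\in\mathbb{X}^*$. Then $\|f\|\le\|\phi\|\,\|T\|=\|T\|$ while $f(z)=\phi(Tz)=\|T\|$, whence $\|f\|=\|T\|$ and $f/\|T\|$ is a supporting functional at $z$. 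Now build a compatible s.i.p.\ $[~,~]_{\mathbb{Y}}$ following Giles, using $\phi$ as the supporting functional at $\frac{Tz}{\|Tz\|}$ (anything elsewhere), and a compatible s.i.p.\ $[~,~]_{\mathbb{X}}$ using $f/\|T\|$ as the supporting functional at $z$. Then for every $x\in\mathbb{X}$,
\[ [Tx,Tz]_{\mathbb{Y}}=\|Tz\|\,\phi(Tx)=\|T\|\,f(x),\qquad [x,z]_{\mathbb{X}}=\tfrac{1}{\|T\|}f(x), \]
so $[Tx,Tz]_{\mathbb{Y}}=\|T\|^2[x,z]_{\mathbb{X}}$. (In fact this yields the formally stronger assertion that one fixed pair of s.i.p.'s works simultaneously for all $x$.)

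The only genuinely delicate points are: (i) checking, exactly as Giles does, that the prescribed functionals extend to s.i.p.'s satisfying $(a)$--$(d)$ with $[w,w]=\|w\|^2$ — routine; and (ii) the recognition that transporting a supporting functional from the target to the source along $T$ is what lets one bypass any hyperplane decomposition of $\mathbb{X}$. One could instead follow the architecture of Theorem $2.2$ verbatim, invoking Lemma $2.2$ in place of the stronger orthogonality-preservation theorem of \cite{Sa}; on that alternative route I would expect the main obstacle to be promoting the single Birkhoff--James orthogonal direction furnished by Lemma $2.2$ to control over an entire kernel hyperplane, which is precisely what the pullback construction sidesteps.
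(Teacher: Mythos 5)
Your proof is correct, but it takes a genuinely different route from the paper's. The paper fixes $x$, restricts to the two-dimensional span of $\{x,z\}$, and invokes its Lemma $2.2$ (a path-connectedness argument resting on Lemma $2.1$ and parts of Theorem $2.3$ of \cite{Sa}) to produce a nonzero $y$ with $z \perp_{B} y$ and $Tz \perp_{B} Ty$; it then writes $x = \lambda_0 z + \mu_0 y$ and chooses s.i.p.'s, \emph{depending on} $x$, that annihilate $[y,z]_{\mathbb{X}}$ and $[Ty,Tz]_{\mathbb{Y}}$. You instead pull back a supporting functional $\phi$ at $\frac{Tz}{\|Tz\|}$ to $f = \phi \circ T$, observe that $z \in M_{T}$ forces $\|f\| = \|T\|$ so that $f/\|T\|$ supports $z$, and seed Giles' construction with these two functionals; the identity $[Tx,Tz]_{\mathbb{Y}} = \|T\|\,\phi(Tx) = \|T\|^{2}[x,z]_{\mathbb{X}}$ then holds for every $x$ at once. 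Your computation checks out (in particular $g_{Tz} = \|Tz\|\,\phi$ and $\|Tz\| = \|T\|$ are used correctly), and your argument is essentially a reduction of the operator case to the paper's Theorem $2.1$ applied to $\phi \circ T$. What your route buys: it bypasses Birkhoff--James orthogonality, Lemmas $2.1$ and $2.2$, and the citation to \cite{Sa} entirely, and it delivers the formally stronger conclusion that a single pair of s.i.p.'s, independent of $x$, witnesses the characterization --- whereas the paper's statement and proof only provide s.i.p.'s varying with $x$. What the paper's route buys is the auxiliary Lemma $2.2$ itself (a common orthogonal direction for $z$ and $Tz$ without smoothness), which is of independent geometric interest but is not needed for the theorem if one argues as you do.
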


\begin{proof}
The ``if" part of the theorem follows trivially, as before. Let us prove the ``only if" part of the theorem. Without loss of generality, we may and do assume that $ T $ is nonzero. Let $ z \in M_{T}. $ Let $ x \in \mathbb{X} $ be arbitrary. If $ x=\lambda z, $ for some $ \lambda \in \mathbb{R}, $ then for any two s.i.p. $ [~,~]_{\mathbb{X}} $ and $ [~,~]_{\mathbb{Y}} $ on $ \mathbb{X} $ and $ \mathbb{Y} $ respectively, we have,
\[ [Tx,Tz]_{\mathbb{Y}} = [\lambda Tz,Tz]_{\mathbb{Y}} = \lambda \|Tz\|^2 = \|T\|^2 \lambda = \|T\|^2 [\lambda z,z]_{\mathbb{X}} = \|T\|^2 [x,z]_{\mathbb{X}}. \]

Let us assume that $ x,~z $ are linearly independent. Let $ \mathbb{Z} $ be the two-dimensional normed space spanned by $ \{x,z\}, $ with the induced norm. By Lemma $ 2.2, $ there exists $ y \in \mathbb{Z} \setminus \{0\} $ such that $ z \perp_{B} y $ and $ Tz \perp_{B} Ty. $ We would like to construct two s.i.p. $ [~,~]_{\mathbb{X}} $ and $ [~,~]_{\mathbb{Y}} $ on $ \mathbb{X} $ and $ \mathbb{Y} $ respectively such that the desired condition holds.\\

Clearly, $ x = \lambda_0 z + \mu_0 y, $ for some scalars $ \lambda_0,~\mu_0 \in \mathbb{R}. $ Since $ z \perp_{B} y, $ there exists a functional $ l_{z}:\mathbb{Z} \longrightarrow \mathbb{R} $ such that $ \| l_{z} \| = l_{z}(z) = \|z\| = 1 $ and $ l_{z}(y) = 0. $ By the Hahn-Banach theorem, $ l_{z} $ has a norm preserving extension $ \psi_{z}:\mathbb{X} \longrightarrow \mathbb{R}. $ Clearly, $ \psi_{z}(y) = 0. $ In order to define a s.i.p. on $ \mathbb{X}, $ we once again argue as Giles in \cite{G}, with a minor modification. By virtue of the Hahn-Banach theorem, given any $ w \in S_{\mathbb{X}}, $ there exists at least one bounded linear functional $ g_{w} \in \mathbb{X}^{*} $ such that $ g_{w}(w) = \| g_w \| = 1. $ Following Giles \cite{G}, we choose exactly one such $ g_w. $ For $ \lambda w \in \mathbb{X}, $ where $ \lambda \in \mathbb{R} $ and $ w \in S_{\mathbb{X}}, $ we choose $ g_{\lambda w} \in \mathbb{X}^{*} $ such that $ g_{\lambda w} = \lambda g_{w}. $ Our only modification is the following:\\
 
It is easy to see that $ \psi_{z} $ is a candidate for $ g_{z}. $ We choose $ g_{z} = \psi_{z}. $\\

We now define a s.i.p. $ [~,~]_{\mathbb{X}} $ on $ \mathbb{X} $ by $ [v,w] = g_{w}(v) $ for any $ v,~w \in \mathbb{X}. $ It is easy to check that $ [~,~]_{\mathbb{X}} $ is indeed a s.i.p. in $ \mathbb{X}. $ Moreover, we have,
\[ [x,z]_{\mathbb{X}} = [\lambda_0 z + \mu_0 y,z]_{\mathbb{X}} = \lambda_0 [z,z]_{\mathbb{X}} + \mu_0 [y,z]_{\mathbb{X}} = \lambda_0 \|z\|^2 = \lambda_0. \] 

On the other hand, since $ T $ is nonzero and $ z \in M_{T}, $ we must have, $ Tz $ is nonzero in $ \mathbb{Y}. $ Furthermore, we have already proved that $ Tz \perp_{B} Ty. $  Therefore, following similar method, we can define a s.i.p. $ [~,~]_{\mathbb{Y}} $ on $ \mathbb{Y} $ such that $ [Ty,Tz]_{\mathbb{Y}} = 0. $ Thus, we have,\\
\[ [Tx,Tz]_{\mathbb{Y}} = [\lambda_0 Tz + \mu_0 Ty,Tz]_{\mathbb{Y}} = \lambda_0 [Tz,Tz]_{\mathbb{Y}} + \mu_0 [Ty,Tz]_{\mathbb{Y}} = \lambda_0 \|T\|^2 = \| T \|^2 [x,z]_{\mathbb{X}}. \] 

Since $ x \in \mathbb{X} $ was chosen arbitrarily, this establishes the theorem.
\end{proof}

\begin{remark}
As mentioned in the introduction, Theorem $ 2.3 $ gives a complete answer to the open question posed in \cite{Sa}, regarding a necessary and sufficient condition for a bounded linear operator to attain norm at a given point of the unit sphere. Moreover, we do not require the additional assumptions of completeness, nor do we require the normed spaces to be finite-dimensional or identical.
\end{remark}

As an example of another powerful application of s.i.p. in the study of normed spaces, it is possible to characterize the set of all isometries between any two real normed spaces. Indeed, the following theorem is immediate:

\begin{theorem}
Let $ \mathbb{X},~\mathbb{Y} $ be real normed spaces and $ T \in \mathbb{L}(\mathbb{X},\mathbb{Y}). $ Then $ T $ is an isometry if and only if given any $ x \in \mathbb{X} $ and any $ z \in S_{\mathbb{X}}, $ there exists two s.i.p. $ [~,~]_{\mathbb{X}} $ and $ [~,~]_{\mathbb{Y}} $ on $ \mathbb{X} $ and $ \mathbb{Y} $ respectively such that 
\[ [Tx,Tz]_{\mathbb{Y}} = \| T \|^{2} [x,z]_{\mathbb{X}}. \]
\end{theorem}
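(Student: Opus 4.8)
The plan is to read off this statement directly from Theorem $ 2.3 $ by quantifying over the base point. The key remark is that, for a fixed $ z \in S_{\mathbb{X}} $, the clause occurring in the hypothesis---``for every $ x \in \mathbb{X} $ there exist compatible s.i.p. $ [~,~]_{\mathbb{X}} $ and $ [~,~]_{\mathbb{Y}} $ with $ [Tx,Tz]_{\mathbb{Y}} = \| T \|^{2}[x,z]_{\mathbb{X}} $''---is, by Theorem $ 2.3 $, logically equivalent to the single condition $ z \in M_{T} $. Therefore the full hypothesis, which demands this for every $ x \in \mathbb{X} $ \emph{and} every $ z \in S_{\mathbb{X}} $, is equivalent to the assertion $ M_{T} = S_{\mathbb{X}} $.

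It then remains only to match $ M_{T} = S_{\mathbb{X}} $ with the notion of an isometry. Now $ M_{T} = S_{\mathbb{X}} $ says precisely that $ \| Tz \| = \| T \| $ for every $ z \in S_{\mathbb{X}} $, and by homogeneity this is equivalent to $ \| Tx \| = \| T \| \, \| x \| $ for all $ x \in \mathbb{X} $; under the normalization $ \| T \| = 1 $ this is exactly the statement that $ T $ is an isometry. For the forward direction of the corollary I would then argue: assuming the hypothesis, fix $ z \in S_{\mathbb{X}} $, specialize to $ x = z $, and use $ [Tz,Tz]_{\mathbb{Y}} = \| Tz \|^{2} $ together with $ [z,z]_{\mathbb{X}} = 1 $ to collapse the identity to $ \| Tz \|^{2} = \| T \|^{2} $; hence $ z \in M_{T} $, and since $ z $ was arbitrary, $ M_{T} = S_{\mathbb{X}} $, so $ T $ is an isometry. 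For the converse, if $ T $ is an isometry then every $ z \in S_{\mathbb{X}} $ lies in $ M_{T} $, and the ``only if'' half of Theorem $ 2.3 $ furnishes, for each $ x \in \mathbb{X} $, a pair of s.i.p. realizing the displayed equation.

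Because the real work is already contained in Theorem $ 2.3 $---and, through it, in Lemmas $ 2.1 $ and $ 2.2 $---I do not expect any genuine obstacle here; this is the sense in which the theorem is ``immediate''. The only point I would handle with a little care is the bookkeeping around the constant: ensuring that $ M_{T} = S_{\mathbb{X}} $ is paired with ``isometry'' rather than merely ``$ T $ is a scalar multiple of an isometry'', which amounts to recording the normalization $ \| T \| = 1 $ (and observing that the degenerate operator $ T = 0 $, for which the identity holds vacuously, should be excluded from the statement).
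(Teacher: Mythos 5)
Your derivation is exactly what the paper intends: it offers no proof beyond calling the theorem ``immediate'' from Theorem $2.3$, and your reduction of the hypothesis to $M_{T} = S_{\mathbb{X}}$ by quantifying Theorem $2.3$ over $z$ is precisely that derivation spelled out. Your caveat about the constant is also well taken --- as literally stated the ``if'' direction only yields $\| Tx \| = \| T \| \, \| x \|$, i.e., a scalar multiple of an isometry (and is vacuously satisfied by $T = 0$), so the statement tacitly needs the normalization $\| T \| = 1$; the paper glosses over this point.
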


In view of the extensive use of the concept of s.i.p. towards obtaining a complete characterization of the norm attainment set of a bounded linear operator between real normed spaces, it is perhaps appropriate that we end the present paper with the following remark:

\begin{remark}
As mentioned by both Lumer \cite{L} and Giles \cite{G}, the original motivation behind introducing s.i.p. in normed spaces was to imitate inner product space type arguments in the much broader setting of normed spaces. Our results in this paper further illustrate  the remarkable success of the concept of s.i.p. towards a better understanding of the geometry of normed spaces in general and operator norm attainment in particular.
\end{remark}

\bibliographystyle{amsplain}

\end{document}